\newcommand{\pr}{\prod}
\newcommand{\A}{\ensuremath{\mathcal{A}}}
\newcommand{\C}{\ensuremath{\mathcal{C}}}
\newcommand{\we}{\wedge}\newcommand{\cd}{\cdots}
\newcommand{\Ess}{\tn{Ess}}\newcommand{\Es}{\tn{Ess}'}
\newcommand{\bq}{\begin{ques}}\newcommand{\eq}{\end{ques}}
\newcommand{\bc}{\begin{cor}}\newcommand{\ec}{\end{cor}}
\newcommand{\bcj}{\begin{conj}}\newcommand{\ecj}{\end{conj}}
\newcommand{\bl}{\begin{lem}}\newcommand{\el}{\end{lem}}
\newcommand{\vi}{\\[.1in]}\newcommand{\vii}{\\[.2in]}
\newcommand{\be}{\begin{enumerate}}\newcommand{\ee}{\end{enumerate}}
\newcommand{\bce}{\begin{center}}\newcommand{\ece}{\end{center}}
\newcommand{\bri}{\begin{flushright}}\newcommand{\eri}{\end{flushright}}
\newcommand{\bb}{\begin{block}}\newcommand{\eb}{\end{block}}
\newcommand{\bt}{\begin{thm}}\newcommand{\et}{\end{thm}}
\newcommand{\bpf}{\begin{proof}}\newcommand{\epf}{\end{proof}}
\newcommand{\bex}{\begin{ex}}\newcommand{\eex}{\end{ex}}
\newcommand{\bexr}{\begin{exr}}\newcommand{\eexr}{\end{exr}}
\newcommand{\bft}{\begin{fact}}\newcommand{\eft}{\end{fact}}
\newcommand{\brk}{\begin{rmk}}\newcommand{\erk}{\end{rmk}}
\newcommand{\ba}{\begin{align*}}\newcommand{\ea}{\end{align*}}
\newcommand{\tn}{\textnormal}
\newcommand{\bit}{\begin{itemize}}\newcommand{\eit}{\end{itemize}}
\newcommand{\os}{\overset}\newcommand{\us}{\underset}
\newcommand{\bs}[3]{\us{#1}{\os{#3}{#2}}} 
\newcommand{\bcm}{}
\newcommand{\ol}{\overline}\newcommand{\ul}{\underline}
\newcommand{\hf}{\hfill}
\newcommand{\cci}{\Circle}
\newcommand{\fr}{\frac}
\newcommand{\rr}{\ensuremath{\mathbf{R}}}
\newcommand{\bob}{\begin{ob}}\newcommand{\eob}{\end{ob}}
\newcommand{\bd}{\begin{defn}}\newcommand{\ed}{\end{defn}}
\newcommand{\bp}{\begin{prop}}\newcommand{\ep}{\end{prop}}
\newcommand{\eh}{\emph}\newcommand{\al}{\alpha}
\newcommand{\sub}{\subseteq}
\renewcommand{\i}{\item}
\newcommand{\mb}{\mbox}
\newcommand{\te}{\text}\newcommand{\ph}{\phantom}
\newcommand{\wt}{\widetilde}
\newcommand{\lef}{\left}\newcommand{\ri}{\right}
\newcommand{\then}{\Longrightarrow}
\newcommand{\di}{\displaystyle}\renewcommand{\a}{\bm{a}}
\renewcommand{\b}{\beta}
\newcommand{\x}{\ensuremath{\bm{x}}}
\newcommand{\np}{\newpage}
\theoremstyle{plain}
\newtheorem{thm}{Theorem}[section]
\newtheorem{lem}[thm]{Lemma}
\newtheorem{prop}[thm]{Proposition}\newtheorem{cor}[thm]{Corollary}
\newtheorem{conj}[thm]{Conjecture}
\newtheorem{ques}[thm]{Question}
\newtheorem{keylem}[thm]{Key Lemma}
\theoremstyle{definition}
\newtheorem{ex}[thm]{Example}\newtheorem{exr}{Exercise}
\newtheorem{defn}[thm]{Definition}\newtheorem{rmk}[thm]{Remark}
\newtheorem{fact}[thm]{Fact}
\newtheorem{block}[thm]{}\newtheorem{ob}[thm]{Observation}
\begin{document}
\title[A directed graph structure of ASMs]{A directed graph structure of alternating sign matrices}
\author{Masato Kobayashi}
\date{\today}
\subjclass[2000]{Primary:15B36;\,Secondary:05A05, 05B20, 11C20.}
\keywords{Alternating sign matrices, Bigrassmannian permutations, Bruhat order, determinant, Essential sets, Permutation Statistics, Subtraction-Free Laurent expressions, Total nonnegativity.}
\address{Department of Engineering\\
Kanagawa University,
3-27-1 Rokkaku-bashi, Yokohama 221-8686, Japan.}
\email{kobayashi@math.titech.ac.jp}
\thanks{This was already published as 
M. Kobayashi, A directed graph structure of alternating sign matrices,
Linear Algebra and its Applications \textbf{519} (2017), 164-190.}

\maketitle
\begin{abstract} 
We introduce a new directed graph structure into the set of alternating sign matrices. This includes Bruhat graph (Bruhat order) of the symmetric groups as a subgraph (subposet).\\
\indent Drake-Gerrish-Skandera (2004, 2006) gave characterizations of Bruhat order in terms of total nonnegativity (TNN) and subtraction-free Laurent (SFL) expressions for permutation monomials. With our directed graph, we extend their idea in two ways: first, from permutations to alternating sign matrices; second, $q$-analogs (which we name $q$TNN and $q$SFL properties). 
 As a by-product, we obtain a new kind of permutation statistic, the signed bigrassmannian statistics, using Dodgson's condensation on determinants.\\
\begin{center}
\end{center}
\end{abstract}
\np

\tableofcontents
\section{Introduction}
\label{sec1}
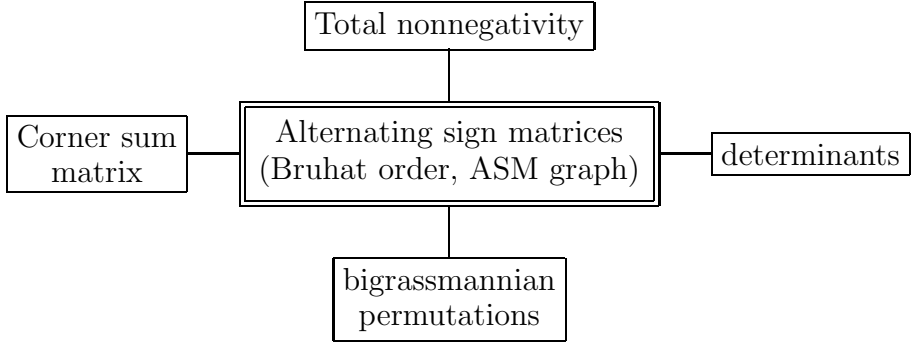
\begin{figure}
\label{related}
\caption{ASMs and Related ideas}
\bce{
\[\xymatrix@=7mm{
&*+[F]\txt{Total nonnegativity}&\\
*+[F]\txt{Corner sum\\ matrix}\ar@{-}[r]&*++[F=]\txt{Alternating sign matrices\\(Bruhat order, ASM graph)}\ar@{-}[d]\ar@{-}[u]\ar@{-}[r]\ar@{-}[l]
&*+[F]\txt{determinants}\\
&*+[F]\txt{bigrassmannian\\ permutations}&
}\]
}\ece
\end{figure}





\subsection{Bruhat order}
\emph{Bruhat order} has been of great importance in the combinatorial matrix theory; there are many equivalent characterizations of this order. For example, one is the transitive closure of the binary relation $u\to v$ on $S_n$ to mean $v=ut$ for some transposition $t$ and $\ell(u)<\ell(v)$ (with $\ell$ the number of inversions). 
Other variations are: 
\bit{
\i Entrywise order on Corner sum matrices; for example, see Brualdi-Deaett \cite{bd} and Fortin \cite{fortin}.
\i Lascoux-Sch\"{u}tzenberger's monotone triangles \cite{ls}.
}\eit

In addition to this list, Drake-Gerrish-Skandera \cite{dgs1,dgs2} found several new characterizations of Bruhat order in terms of permutation monomials:
\bft{Let $u, v\in S_n$. Then the following are equivalent:
\be{\item $u\le v$ in Bruhat order.
\i the polynomial $x_{1u(1)}\cdots x_{nu(n)}-x_{1v(1)}\cdots x_{nv(n)}$ is TNN.
\i the polynomial $x_{1u(1)}\cdots x_{nu(n)}-x_{1v(1)}\cdots x_{nv(n)}$ has (SFL) property.
}\ee
}\eft

Here TNN and SFL abbreviate ``Totally NonNegative" and ``Subtraction-Free Laurent expression", respectively; we give details of these terms later.

\subsection{Main results}
The aim of this article is simply to generalize Drake-Gerrish-Skandera's result above in two ways (Theorem \ref{qth}); first, permutations to \eh{alternating sign matrices} (ASMs); second, we will establish a  $q$-analog of their result. We also observe some byproducts on permutation statistics (Theorems \ref{bff} and \ref{mth3}). 
For this purpose, we introduce a new directed graph structure to ASMs as in the title; we call it \eh{ASM graph} (Figure \ref{related}). 

\subsection{Outline}
This articles consists of six sections. 
Section \ref{sec2} serves preliminaries on permutations and alternating sign matrices. Section \ref{sec3} gives a precise definition of ASM graph with notions of essential rectangles and bigrassmannian statistics; in particular, Key Lemma \ref{answer} will play a role in the sequel. In Section \ref{sec4}, we review Total nonnegativity and Subtraction-Free Laurent property. In Section \ref{sec5}, we give proofs of main results. We end with the conclusion remark in Section \ref{sec6}.

To better understand the global picture of our discussion, it is helpful 
to keep Figure \ref{mind} in mind.

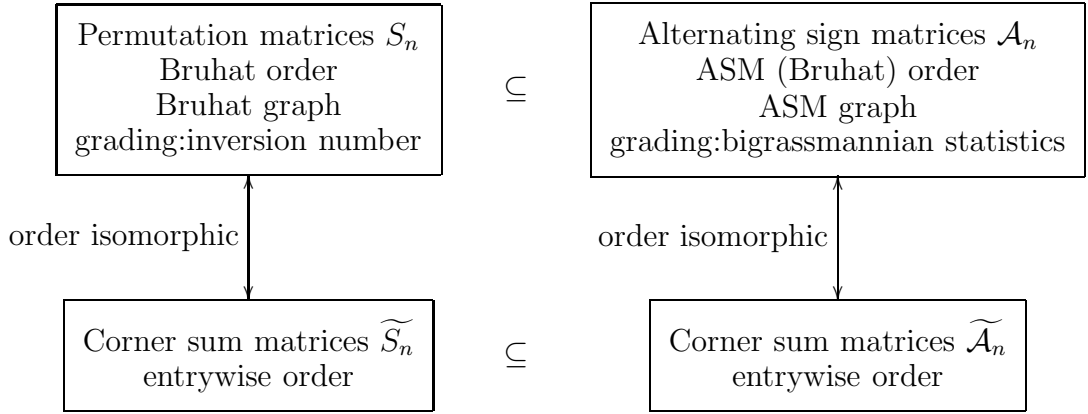
\begin{figure}
\caption{Global picture of our discussion}\label{mind}
\mb{}\vii
\begin{center}
\xymatrix@=7mm{
*++[F]\txt{Permutation matrices $S_{n}$\\Bruhat order\\
Bruhat graph\\
grading:inversion number
}
\ar@{<->}[dd]_-*\txt{order isomorphic}
&*\txt{$\sub$}&
*++[F]\txt{Alternating sign matrices $\mathcal{A}_{n}$\\ASM (Bruhat) order\\
ASM graph\\
grading:bigrassmannian statistics
}\ar@{<->}[dd]_-*\txt{order isomorphic}\\
&&&
\\
*++[F]\txt{Corner sum matrices $\wt{S_{n}}$\\entrywise order
}
&\sub&
*++[F]\txt{Corner sum matrices $\wt{\mathcal{A}_{n}}$\\entrywise order
}
}
\end{center}
\end{figure}
%
%

\subsection{Additional note}
At the time of writing this article, 
the author found that 
there are overlap with 
the recent article 
\begin{quote}
R. Brualdi, M. Schroeder, Alternating sign matrices and 
their Bruhat order, to appear in Discrete Math.
\end{quote}
Brualdi and Schroeder discuss the sequential construction of an ASM from the unit matrix (corresponding to our directed graph structure) as well as an enumerative property of B-rank function for ASMs (corresponding to bigrassmannian statistics in our terminology).

\begin{center}
{\bf acknowledgment.}\\
The author would like to thank the editor as well as the anonymous referee for helpful comments for improvement of the manuscript.
\end{center}

\section{Alternating sign matrices}\label{sec2}

\begin{figure}
\caption{$(\mathcal{A}_3, \le)$}
\label{a3}
\[
\xymatrix@=5mm{
&*+{\left[\begin{array}{ccc}0  &0   &1   \\ 0 & 1  & 0  \\1  & 0  &0  \end{array}\right]}
\ar@{-}[dl]\ar@{-}[dr]
&\\
*+{\left[\begin{array}{ccc}0 &1   &0   \\0  &0   &1   \\ 1 & 0  &0  \end{array}\right]}
\ar@{-}[dr]
&&*+{\left[\begin{array}{ccc}0 &0   &1   \\ 1 &0  &0   \\0  & 1  &0  \end{array}\right]}\ar@{-}[dl]
\\
&*+{\left[\begin{array}{ccc} 0 &1   &0   \\1  &-1   &1   \\0 &1   &0  \end{array}\right]}
\ar@{-}[dl]\ar@{-}[dr]
\\
*+{\left[\begin{array}{ccc} 1 & 0  &0   \\ 0 &0  &1   \\0  &1   &0  \end{array}\right]}
\ar@{-}[dr]
&&*+{\left[\begin{array}{ccc}  0& 1  &0  \\ 1 & 0  &0   \\0  &0   &1  \end{array}\right]}
\ar@{-}[dl]
\\
&*+{\left[\begin{array}{ccc}  1& 0  &0   \\0  &1   &0   \\ 0 &0   &1  \end{array}\right]}
&
}\]
\end{figure}
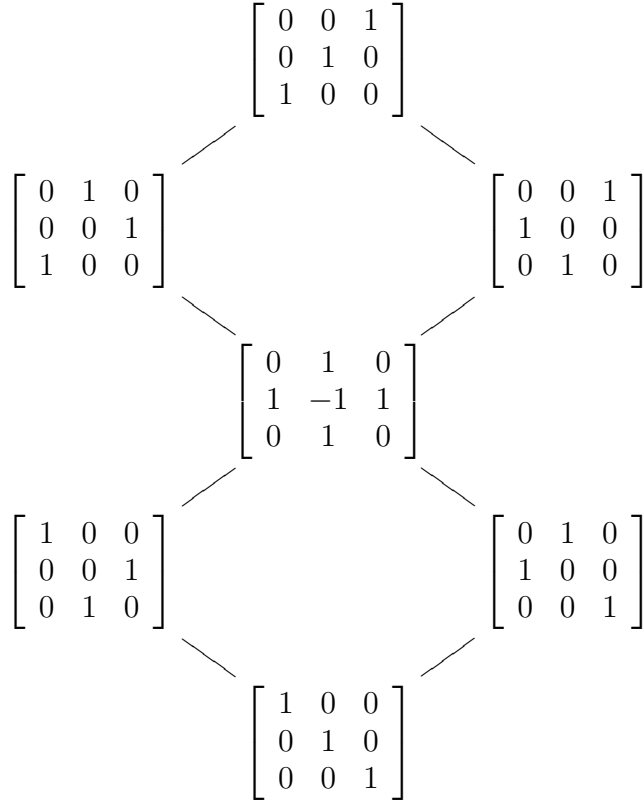

\begin{figure}
\caption{$(\wt{\mathcal{A}}_3, \le)$}
\label{q3}
\[
\xymatrix@=5mm{
&*+{\left[\begin{array}{ccc}0  &0   &1   \\ 0 & 1  & 2  \\1  & 2  &3  \end{array}\right]}
\ar@{-}[dl]\ar@{-}[dr]
&\\
*+{\left[\begin{array}{ccc}0 &1   &1   \\0  &1   &2   \\ 1 & 2  &3  \end{array}\right]}
\ar@{-}[dr]
&&*+{\left[\begin{array}{ccc}0 &0   &1   \\ 1 &1  &2   \\1  & 2  &3  \end{array}\right]}\ar@{-}[dl]
\\
&*+{\left[\begin{array}{ccc} 0 &1   &1   \\1  &1   &2   \\1 &2   &3  \end{array}\right]}
\ar@{-}[dl]\ar@{-}[dr]
\\
*+{\left[\begin{array}{ccc} 1 & 1  &1   \\ 1 &1  &2   \\1  &2   &3  \end{array}\right]}
\ar@{-}[dr]
&&*+{\left[\begin{array}{ccc}  0& 1  &1   \\ 1 & 2  &2   \\1  &2   &3  \end{array}\right]}
\ar@{-}[dl]
\\
&*+{\left[\begin{array}{ccc}  1& 1  &1   \\1  &2   &2   \\ 1 &2   &3  \end{array}\right]}
&
}\]
\end{figure}


For a positive integer $n$, let $[n]$ denote the set $\{1, 2, \dots, n\}$. 
Throughout this article, we assume that $n\ge 3$ to avoid some triviality. By $S_n$ we mean the symmetric group on $[n]$. To represent permutations, we often use \eh{one-line notation}: ``$u=i_1\cdots i_n$" with $i_k\in [n]$ means $u(k)=i_k$. For instance, $u=231$ means $u(1)=2, u(2)=3$ and $u(3)=1$.
Below, $A=(a_{ij})$ and $B=(b_{ij})$ are square matrices of size $n$ unless otherwise specified. For convenience, we write $a_{ij}$ as well as $A(i, j)$ for a matrix entry of $A$.\\

\subsection{Alternating sign matrices}\label{sec22}
We begin with definitions of permutation matrices and alternating sign matrices. 

\bd{We say that $A$ is a \emph{permutation matrix} (PM) if 
there exists a unique permutation $u\in S_n$ such that $a_{ij}=1$ if $j=u(i)$ and $a_{ij}=0$ otherwise.
}\ed

In this way, we often identify a permutation and a permutation matrix.

\bd{We say that $A$ is an \emph{alternating sign matrix} (ASM) if
for all $(i, j)\in [n]^2$, 
we have 
\[\begin{array}{lll}\di a_{ij}\in \{-1, 0, 1\},  &   &\di\sum_{k=1}^j a_{ik}\in \{0, 1\},   \vi \di\sum_{k=1}^i a_{kj}\in \{0, 1\} &\te{ and }   &  \di\sum_{k=1}^n a_{ik}= \sum_{k=1}^n a_{kj}=1.\end{array}\]

Denote by $\A_n$ the set of all alternating sign matrices of size $n$.
}\ed
Note that every PM is an ASM. Say an ASM is \emph{proper} if it is not a PM; in other words, an ASM is proper if and only if it has a $-1$ entry. 
Figure \ref{a3} shows seven ASMs in $\A_3$; the only one matrix 
in the middle is proper.


\subsection{Corner sum matrices}

\bd{The \emph{corner sum matrix} of $A\in \A_n$ is the $n$ by $n$ matrix $\wt{A}$ defined by 
\[
\wt{A}(i, j)=\sum_{p\le i, q\le j}a_{pq}\]
for all $i, j$. Denote by $\wt{\A}_n$ the set of all such matrices. 
}\ed

\begin{ex}\hf\vi
For $A=\left[\begin{array}{ccc} 0 &1   &0   \\1  &-1   &1   \\0 &1   &0  \end{array}\right]$, we have 
$\wt{A}=\left[\begin{array}{ccc} 1 & 1  &1   \\ 1 &1  &2   \\1  &2   &3  \end{array}\right].$
\end{ex}

\brk{\hfill
\be{\i Entries of each corner sum matrix are weakly increasing along rows and columns:
$\wt{A}(i, j)\le \wt{A}(k, l)$ if $i\le k$ and $j\le l$.
\i It is convenient to define $a_{ij}=0$ and $\wt{A}(i, j)=0$ whenever $i$ or $j$ is $0$. Then, we can recover each entry $a_{ij}$ from entries of $\wt{A}$: 
\[a_{ij}=\wt{A}(i, j)+\wt{A}(i-1, j-1)-\wt{A}(i, j-1)-\wt{A}(i-1, j) \mb{ for } i, j\ge 1.\]
The correspondence $A\leftrightarrow \wt{A}$ between $\A_n$ and $\wt{\A}_n$ is in fact a bijection; see Figures \ref{a3} and \ref{q3}, for example. 
}\ee
}\erk

The following criterion will be useful later.
\bft[Robbins-Rumsey {\cite[p.172, Lemma 1]{mmr2}}]{\label{cri} Let $X$ be a square matrix of size $n$.
Then $X\in \wt{\A}_n$ if and only if
$X(i, n)=X(n, i)=i$ for all $i$ and 
$X(i, j)-X(i-1,j)\in\{0, 1\}$, $X(i, j)-X(i, j-1)\in\{0, 1\}$ for all $i, j$.
}\eft

\section{Bruhat graph and ASM graph}\label{sec3}

In this section, we give a precise definition of \eh{ASM graph}; this is a directed graph structure of ASMs as in the title of this article.
We first review the definition of Bruhat graph on permutations; we will see that it is a certain subgraph of ASM graph.
\subsection{Bruhat graph}

For natural numbers $i<j\le n$, let $t_{ij}$ denote the transposition interchanging $i$ and $j$.
Say a pair $(i, j)$ is an \emph{inversion} of a permutation $u\in S_n$ if $i<j$ and $u(i)>u(j)$.
Let $\ell(u)$ be the number of inversions of $u$. Write $u\to v$ if $v=ut_{ij}$ and $\ell(u)<\ell(v)$ (equivalently, $(i, j)$ is an inversion of $v$). 
The directed graph $(S_n, \to) $ is the \emph{Bruhat graph}.

\bex{We have the edge relation $1342\to 4312 $; in terms of permutation matrices, we understand this relation as 
\[
\left(\begin{array}{cccc}1 & 0 & 0 & 0 \\0 & 0 & 1 & 0 \\0 & 0 & 0 & 1 \\0 & 1 & 0 & 0\end{array}\right)\to \left(\begin{array}{cccc}0 & 0 & 0 & 1 \\0 & 0 & 1 & 0 \\1 & 0 & 0 & 0 \\0 & 1 & 0 & 0\end{array}\right)\]
interchanging first and fourth columns (first and third rows).
}\eex

\bd{Define \emph{Bruhat order} $u\le v$ in $S_n$ if there exists a directed path from $u$ to $v$.}\ed

This is indeed a partial order on $S_n$. Here are more details:

\bft[Chain Property]{$(S_n, \le)$ is a graded poset ranked by $\ell$.
In other words, if $u\le v$, then there exists a directed path $u=u_0\to u_1\to u_2\to \cdots \to u_k=v$ such that $\ell(u_i)-\ell(u_{i-1})=1$.
}\eft

We wish to extend Bruhat order to ASMs (recall that every PM is an ASM). However, we have to take care of the following two points:

\bit{
\i Transposing columns or rows of an ASM does not necessarily produce an ASM. Thus, we need to modify a definition of the edge relation.
\i Find a rank function on ASMs, instead of the inversion number, such that it is monotonically increasing along those directed edges.
}\eit

We solve these problems with a new definition of a directed edge relation  using \eh{corner sum matrices} and \eh{bigrassmannian statistics}.





\subsection{ASM order}

Make sure that there is an equivalent characterization of Bruhat order in terms of corner sum matrices (rather than entries of PMs):

\bft{The following are equivalent:
\be{\i $u\le v$ in Bruhat order in $S_n$.
\i $\wt{u}(i, j)\ge \wt{v}(i, j)$ for all $i, j\in[n]$.
}\ee
}\eft

This idea naturally extends to ASMs:

\bd{Define \emph{ASM order} $A\le B$ in $\A_n$ if $\wt{A}(i, j)\ge \wt{B}(i, j)$ for all $i, j\in[n]$.
}\ed
By abuse of language, we also call this ``Bruhat order".
Hence $(\A_{n}, \le)$ is now a poset.


\brk{
Indeed, $(\A_n, \le)$ is a finite distributive lattice 
as the MacNeille completion of Bruhat order (the smallest lattice which contains $(S_{n}, \le)$ as a subposet). See Reading \cite{reading} for some more details.
}\erk


\subsection{Essential rectangles}
As before, let $A$ be an ASM. Consider integers $i, j, k, l\in[n]$ such that $i< j$ and $k< l$.
Let 
\[
R_{ij}^{kl}=\{(p, q)\in[n]^2\mid i\le p< j \tn{\ph{a}and } k\le q< l\}\]
be rectangular positions in a matrix (here, $i\le p$ and $k\le q$ are weak inequalities while $p< j$ and $q<l$ are strict).
\bd{We say that $R_{in}^{kl}$ is an \eh{essential rectangle} for $A$
if 
\[\begin{array}{lcl} \wt{A}(p, k)=\wt{A}(p, k-1), &   & \wt{A}(p, l)=\wt{A}(p, l-1)+1,
  \\\wt{A}(i, q)=\wt{A}(i-1, q),
    & \te{and}  & \wt{A}(j, q)=\wt{A}(j-1, q)+1 \end{array}\]
for all $(p, q)\in R_{ij}^{kl}$.
Similarly, say $R_{ij}^{kl}$ is a \eh{dual essential rectangle} for $A$
if 
\[\begin{array}{lcl} \wt{A}(p, k)=\wt{A}(p, k-1)+1, &   & \wt{A}(p, l)=\wt{A}(p, l-1),
  \\\wt{A}(i, q)=\wt{A}(i-1, q)+1,
    & \te{and}  & \wt{A}(j, q)=\wt{A}(j-1, q) \end{array}\]
for all $(p, q)\in R_{ij}^{kl}$.
We call such conditions (dual) \eh{essential conditions}.
Denote by $E(A)$ ($E^*(A)$) the set of such (dual) rectangles for $A$.
}\ed

Recall that adjacent entries of any corner sum matrix differs only by 0 or 1. These conditions above describe ``boundary conditions" on these rectangular positions.
Note: we understand $\wt{A}(p, q)=0$ if $p$ or $q$ is $0$; we often omit these zero entries when we write a corner sum matrix.

\bex{
On the one hand, the permutation 4312 has an essential rectangle $R_{13}^{14}$ since 
\[
\wt{4312}=\left(\begin{array}{cccc}\ul{\,0\,} & \ul{\,0\,} & \ul{\,0\,} & 1 \\\ul{\,0\,} & \ul{\,0\,} & \ul{\,1\,} & 2 \\1 & 1 & 2 & 3 \\1 & 2 & 3 & 4\end{array}\right)
.\] 
On the other hand, the permutation 1342 has a dual essential rectangle $R_{13}^{14}$ since 
\[
\wt{1342}=\left(\begin{array}{cccc}\ul{\,1\,} & \ul{\,1\,} & \ul{\,1\,} & 1 \\\ul{\,1\,} & \ul{\,1\,} & \ul{\,2\,} & 2 \\1 & 1 & 2 & 3 \\1 & 2 & 3 & 4\end{array}\right)
.\] As we see, underlined positions indicate such rectangles.
}\eex

\bp{
Let $u\in S_n$ and $i<j$. Then the following are equivalent:
\be{\i $(i, j)$ is an inversion of $u$.
\i $R_{i, j}^{u(j), u(i)}$ is an essential rectangle for $u$.
}\ee
}\ep

\bpf{If $(i, j)$ is an inversion of $u$, then there exist two $1$s at $(i, u(i))$ and $(j, u(j))$ positions in the permutation matrix $u$. It follows from the definition of a corner sum matrix that $R_{ij}^{u(j)u(i)}$ satisfies the essential conditions described above. Conversely, if $R_{ij}^{u(j)u(i)}$ is an essential rectangle for $u$, then it is necessarily that $u(j)<u(i)$.
}\epf

\bd{For $i<j$ and $k< l$, let $\wt{R}_{ij}^{kl}$
be the $n$ by $n$ matrix such that its $(p, q)$-entry is 1 if $(p, q)\in R_{ij}^{kl}$ or $0$ otherwise. Define a \eh{rectangular operator} $\wt{r}_{ij}^{kl}:\wt{\A}_n\to \wt{\A}_n$
\[\wt{r}_{ij}^{kl}(\wt{A})=\begin{cases}
\wt{A}+\wt{R}_{ij}^{kl}&\te{if } R_{ij}^{kl}\in E(A),\\
\wt{A}-\wt{R}_{ij}^{kl}&\te{if } R_{ij}^{kl}\in E^*(A),\\
A &\te{otherwise.}
\end{cases}\]
}\ed

So this operator changes entries of a consecutive submatrix of entries of a corner sum matrix.

\begin{ex}
\[
\wt{1342}=
\wt{r}_{13}^{14}(\wt{4312})=
\underbrace{\left(\begin{array}{cccc}0 & 0 & 0 & 1 \\0 & 0 & 1 & 2 \\1 & 1 & 2 & 3 \\1 & 2 & 3 & 4\end{array}\right)}_{\wt{4312}}
+
\underbrace{\left(\begin{array}{cccc}1 & 1 & 1 & 0 \\1 & 1 & 1 & 0 \\0 & 0 & 0 & 0 \\0 & 0 & 0 & 0\end{array}\right)}_{\wt{R}_{13}^{14}}
.\] 
\end{ex}

Similarly, define an operator $r_{ij}^{kl}:\A_n\to \A_n$ 
with $r_{ij}^{kl}(A)$ being the ASM whose corner sum matrix is $\wt{r}_{ij}^{kl}(\wt{A})$.

\brk{\hfill
\be{\i Let us be careful: whenever $R_{ij}^{kl}\in E(A)$, is the resulting matrix $\wt{A}+\wt{R}_{ij}^{kl}$ an element of $\wt{\A}_n$? Yes. Indeed, adjacent entries of $\wt{A}+\wt{R}_{ij}^{kl}$ differ only by $0$ or $1$ (sharing the $n$-th row and column entries of $\wt{A}$). Fact \ref{cri} guarantees that $\wt{A}+\wt{R}_{ij}^{kl}$ is a corner sum matrix for some (unique) ASM.
\i Observe that $r_{ij}^{kl}$ is an involution, i.e., $(r_{ij}^{kl})^2A=A$.
}\ee
}\erk



With this idea, it is natural to introduce the following statistic for ASMs as (the negative of) a sum of entries of corner sum matrices.

\bd{For $i, j$, let $i\we j=\min\{i, j\}$.
For $A\in \A_n$, define the \emph{bigrassmannian statistic}
\[
\b(A)=
\sum_{i, j=1}^n(i\we j) -\sum_{i, j=1}^n\wt{A}(i, j).\]
}\ed

Here the constant $\sum i\we j$ comes for normalization so that $\b(e)=0$ where $e$ is the unit of $S_n$ so that $\wt{e}(i, j)=i\we j$.

Observe the following dichotomy: 
for each $R_{ij}^{kl}\in E(A)\cup E^*(A)$, we have 
either $\beta(r_{ij}^{kl}A)<\beta(A) \iff R_{ij}^{kl}\in E(A)$ or $\beta(r_{ij}^{kl}A)>\beta(A) \iff R_{ij}^{kl}\in E^*(A)$.
With notions of essential rectangles and this statistic, we are now ready to introduce ASM graph as a generalization of Bruhat graph.

\bd{Define an edge relation $A\bs{ij}{\to}{kl}B$ in $\A_n$ if $B=r_{ij}^{kl}(A)$ and $\b(A)<\b(B)$. 
By $A\to B$ we mean $A\bs{ij}{\to}{kl}B$ for some $i, j, k, l$.
Call the directed graph $(\A_n, \to)$ \eh{ASM graph}.
}\ed

\begin{figure}
\caption{$(\mathcal{A}_3, \to)$}
\label{a3g}
\[
\xymatrix@=5mm{
&*+{\left(\begin{array}{ccc}0  &0   &1   \\ 0 & 1  & 0  \\1  & 0  &0  \end{array}\right)}
\ar@{<-}[dl]\ar@{<-}[dr]\ar@{<-}@/^9ex/[dddd]
&\\
*+{\left(\begin{array}{ccc}0 &1   &0   \\0  &0   &1   \\ 1 & 0  &0  \end{array}\right)}
\ar@{<-}[dr]\ar@{<-}[dd]\ar@{<-}@/^9ex/[ddrr]
&&*+{\left(\begin{array}{ccc}0 &0   &1   \\ 1 &0  &0   \\0  & 1  &0  \end{array}\right)}\ar@{<-}[dl]\ar@{<-}[dd]\ar@{<-}@/_9ex/[ddll]
\\
&*+{\left(\begin{array}{ccc} 0 &1   &0   \\1  &-1   &1   \\0 &1   &0  \end{array}\right)}
\ar@{<-}[dl]\ar@{<-}[dr]
\\
*+{\left(\begin{array}{ccc} 1 & 0  &0   \\ 0 &0  &1   \\0  &1   &0  \end{array}\right)}
\ar@{<-}[dr]
&&*+{\left(\begin{array}{ccc}  0& 1  &0  \\ 1 & 0  &0   \\0  &0   &1  \end{array}\right)}
\ar@{<-}[dl]
\\
&*+{\left(\begin{array}{ccc}  1& 0  &0   \\0  &1   &0   \\ 0 &0   &1  \end{array}\right)}
&
}\]
\end{figure}
\begin{figure}
\caption{$(\wt{\mathcal{A}_3}, \to)$}
\label{wta3}
\[
\xymatrix@=5mm{
&*+{\left(\begin{array}{ccc}0  &0   &1   \\ 0 & 1  & 2  \\1  & 2  &3  \end{array}\right)}
\ar@{<-}[dl]\ar@{<-}[dr]\ar@{<-}@/^9ex/[dddd]
&\\
*+{\left(\begin{array}{ccc}0 &1   &1   \\0  &1   &2   \\ 1 & 2  &3  \end{array}\right)}
\ar@{<-}[dr]\ar@{<-}[dd]\ar@{<-}@/^9ex/[ddrr]
&&*+{\left(\begin{array}{ccc}0 &0   &1   \\ 1 &1  &2   \\1  & 2  &3  \end{array}\right)}\ar@{<-}[dl]\ar@{<-}[dd]\ar@{<-}@/_9ex/[ddll]
\\
&*+{\left(\begin{array}{ccc} 0 &1   &1   \\1  &1   &2   \\1 &2   &3  \end{array}\right)}
\ar@{<-}[dl]\ar@{<-}[dr]
\\
*+{\left(\begin{array}{ccc} 1 & 1  &1   \\ 1 &1  &2   \\1  &2   &3  \end{array}\right)}
\ar@{<-}[dr]
&&*+{\left(\begin{array}{ccc}  0& 1  &1  \\ 1 & 2  &2   \\1  &2   &3  \end{array}\right)}
\ar@{<-}[dl]
\\
&*+{\left(\begin{array}{ccc}  1& 1  &1   \\1  &2   &2   \\ 1 &2   &3  \end{array}\right)}
&
}\]
\end{figure}

It naturally induces the same directed graph structure on $\wt{\A}_n$; by abuse of language, we call it ASM graph as well.\\
As shown above, every edge in Bruhat graph is also an edge in ASM graph; see Figure \ref{a3g}. In terms of this new graph, we may characterize ASM order as follows:

\bp{The following are equivalent:
\be{\i $A\le B$ in ASM order.
\i There exists a directed path from $A$ to $B$.
}\ee
}\ep







\subsection{Key Lemma}

We defined the edge relation for two ASMs in terms of their corner sum matrices. Along this relation, what happens back to entries of the two ASMs? Key Lemma \ref{answer} below answers this question completely; it will play a key role to prove main results in Section \ref{sec5}. Before that, we take auxiliary two steps with the following lemmas.


\bl[nonpositivity]{
Let $B\in \A_n$. Suppose $R_{ij}^{kl}\in E(B)$ is given. 
Then, $b_{ik}\le 0$ and $b_{jl}\le 0$.
}\el


\bpf{
Suppose $R_{ij}^{kl}\in E(B)$. Thanks to one of the essential conditions 
$\wt{B}(i, k)=\wt{B}(i, k-1)$, we have 
\[b_{ik}=\wt{B}(i, k)+\wt{B}(i-1, k-1)-\wt{B}(i, k-1)-\wt{B}(i-1, k)
=\wt{B}(i-1, k-1)-\wt{B}(i-1, k)\le 0.
\]
Moreover, two of essential conditions $\wt{B}(j, l-1)=\wt{B}(j-1, l-1)+1$ and 
$\wt{B}(j-1, l)=\wt{B}(j-1, l-1)+1$ imply that 
\begin{align*}
b_{jl}&=\wt{B}(j, l)+\wt{B}(j-1, l-1)-\wt{B}(j, l-1)-\wt{B}(j-1, l)\\
&=\wt{B}(j, l)-\wt{B}(j-1, l-1)-2\le 0.
\end{align*}
}\epf

\bl[nonnegativity]{Let $B\in \A_n$. Suppose $R_{ij}^{kl}\in E(B)$ is given. 
Then, $b_{il}\ge 0$ and $b_{jk}\ge 0$.
}\el

\bpf{
Thanks to one of essential conditions $\wt{B}(i, l)=\wt{B}(i, l-1)+1$, we have 
\[b_{il}=\wt{B}(i, l)+\wt{B}(i-1, l-1)-\wt{B}(i, l-1)-\wt{B}(i-1, l)
=\wt{B}(i-1, l-1)-\wt{B}(i-1, l)+1\ge 0.
\]
It is similar to show that $b_{jk}\ge 0$.
}\epf




\begin{table}[htp]
\renewcommand{\arraystretch}{1.7}
\caption{16 kinds of edge relations $A\to B$ in ASM graph}
\begin{center}
\begin{tabular}{|c|c|c||c|c|c|}\hline
type&
$\left(\begin{array}{cc}b_{ik}& b_{il} \\b_{jk}  &b_{jl}  \end{array}\right)$&$\left(\begin{array}{cc}a_{ik}& a_{il} \\a_{jk}  &a_{jl}  \end{array}\right)$
&type&$\left(\begin{array}{cc}b_{ik}& b_{il} \\b_{jk}  &b_{jl}  \end{array}\right)$&$\left(\begin{array}{cc}a_{ik}& a_{il} \\a_{jk}  &a_{jl}  \end{array}\right)$\\\hline
1&$\left(\begin{array}{cc}{0} & 1 \\1  &0  \end{array}\right)$&
$\left(\begin{array}{cc}1&0  \\0  &1  \end{array}\right)$
&9&$\left(\begin{array}{cc}-1 & 1 \\1  &0  \end{array}\right)$&
$\left(\begin{array}{cc}0&0  \\0  &1  \end{array}\right)$
\\\hline
2&$\left(\begin{array}{cc}{0} & 0 \\1  &0  \end{array}\right)$&$\left(\begin{array}{cc}1&  -1\\ 0 & 1 \end{array}\right)$
&10&$\left(\begin{array}{cc}{-1} & 0 \\1  &0  \end{array}\right)$&$\left(\begin{array}{cc}0&  -1\\ 0 & 1 \end{array}\right)$
\\\hline
3&$\left(\begin{array}{cc}{0} & 1 \\0  &0  \end{array}\right)$&
$\left(\begin{array}{cc}1& 0 \\-1  & 1 \end{array}\right)$
&11&$\left(\begin{array}{cc}{-1} & 1 \\0  &0  \end{array}\right)$&
$\left(\begin{array}{cc}0& 0 \\-1  & 1 \end{array}\right)$
\\\hline
4&$\left(\begin{array}{cc}{0} & 0 \\0  &0  \end{array}\right)$&$\left(\begin{array}{cc}1&  -1\\-1  &1  \end{array}\right)$
&12&$\left(\begin{array}{cc}{-1} & 0 \\0  &0  \end{array}\right)$&$\left(\begin{array}{cc}0&  -1\\-1  &1  \end{array}\right)$
\\\hline
5&$\left(\begin{array}{cc}{0} & 1 \\1  &-1  \end{array}\right)$&
$\left(\begin{array}{cc}1& 0 \\ 0 & 0 \end{array}\right)$
&13&$\left(\begin{array}{cc}{-1} & 1 \\1  &-1  \end{array}\right)$&
$\left(\begin{array}{cc}0& 0 \\ 0 & 0 \end{array}\right)$
\\\hline
6&$\left(\begin{array}{cc}{0} & 0 \\1  &-1  \end{array}\right)$&$\left(\begin{array}{cc}1& -1 \\0  &0  \end{array}\right)$
&14&$\left(\begin{array}{cc}{-1} & 0 \\1  &-1  \end{array}\right)$&$\left(\begin{array}{cc}0& -1 \\0  &0  \end{array}\right)$
\\\hline
7&$\left(\begin{array}{cc}{0} & 1 \\0  &-1  \end{array}\right)$&$\left(\begin{array}{cc}1& 0 \\ -1 & 0 \end{array}\right)$
&15&$\left(\begin{array}{cc}{-1} & 1 \\0  &-1  \end{array}\right)$&$\left(\begin{array}{cc}0& 0 \\ -1 & 0 \end{array}\right)$
\\\hline
8&$\left(\begin{array}{cc}{0} & 0 \\0  &-1  \end{array}\right)$&$\left(\begin{array}{cc}1& -1 \\-1  &0  \end{array}\right)$
&16&$\left(\begin{array}{cc}{-1} & 0 \\0  &-1  \end{array}\right)$&$\left(\begin{array}{cc}0& -1 \\-1  &0  \end{array}\right)$
\\\hline
\end{tabular}
\end{center}
\label{d1}
\end{table}%

These two lemmas assert that each of $b_{ik}, b_{il}, b_{jk}, b_{jl}$ can take two values. In total, there are 16 cases as listed in Table \ref{d1}.

\begin{keylem}
\label{answer}
Let $B\in \A_n$ and $R_{ij}^{kl}\in E(B)$. Consider a square matrix $A$ of size $n$. Then, the following are equivalent:
\be{\i $A\bs{ij}{\to}{kl} B$.
\i 
The entries $(a_{ik}, a_{il}, a_{jk}, a_{jl})$ satisfy 
\[
\left(\begin{array}{cc}a_{ik}& a_{il} \\a_{jk}  &a_{jl}  \end{array}\right)-
\left(\begin{array}{cc}b_{ik}& b_{il} \\b_{jk}  &b_{jl}  \end{array}\right)=\left(\begin{array}{cc}1& -1 \\-1  &1  \end{array}\right)\]
 as listed in Table \ref{d1}. Moreover, if $(p, q)\not\in\{(i, k), (i, l), (j, k), (j, l)\}$, then $a_{pq}=b_{pq}$.
}\ee
\end{keylem}

\bpf{$(1)\then (2)$: Suppose $\wt{A}=\wt{B}+\wt{R}_{ij}^{kl}$ so that $\wt{A}(p, q)=\wt{B}(p, q)$ if and only if $(p, q)\not\in R_{ij}^{kl}$.
Thus, equalities 
\begin{align*}
a_{ik}&=\wt{A}(i, k)+\wt{A}(i-1, k-1)-\wt{A}(i-1, k)-\wt{A}(i, k-1) \te{ and}\\
b_{ik}&=\wt{B}(i, k)+\wt{B}(i-1, k-1)-\wt{B}(i-1, k)-\wt{B}(i, k-1)
\end{align*}
show that $a_{ik}-b_{ik}=\wt{A}(i, k)-\wt{B}(i, k)=1$ (the other six terms are gone). Similarly, 
\begin{align*}
a_{il}&=\wt{A}(i, l)+\wt{A}(i-1, l-1)-\wt{A}(i-1, l)-\wt{A}(i, l-1) \te{ and}\\
b_{il}&=\wt{B}(i, l)+\wt{B}(i-1, l-1)-\wt{B}(i-1, l)-\wt{B}(i, l-1)
\end{align*}
show that $a_{il}-b_{il}=-\wt{A}(i, l-1)+\wt{B}(i, l-1)=-1$.
In the same way, $a_{jk}-b_{jk}=-1$.
Likewise, 
\begin{align*}
a_{jl}&=\wt{A}(j, l)+\wt{A}(j-1, l-1)-\wt{A}(j-1, l)-\wt{A}(j, l-1) \te{ and}\\
b_{jl}&=\wt{B}(j, l)+\wt{B}(j-1, l-1)-\wt{B}(j-1, l)-\wt{B}(j, l-1)
\end{align*}
show that $a_{jl}-b_{jl}=\wt{A}(j-1, l-1)-\wt{B}(j-1, l-1)=1$.
For other $(p, q)$, observe that $|\{(p, q), (p-1, q-1), (p-1, q), (p, q-1)\}\cap R_{ij}^{kl}|$ is either $0$, $2$ or $4$.
If it is 0 or 4, then clearly $a_{pq}=b_{pq}$ follows.
If it is $2$, then either $p\in\{i, j\}$ or $q\in\{k, l\}$.
Here suppose $p=i$ and $q\not\in\{k, l\}$ so that 
\[a_{pq}-b_{pq}=\wt{A}(p, q)-\wt{B}(p, q)-(\wt{A}(p, q-1)-\wt{B}(p, q-1))
=1-1=0.\]
It is analogous to verify other cases.\\
$(2)\then (1)$: We can reverse most of the proof above.
}\epf

Table \ref{d1} indicates such 16 edge relations; note that only the type 1 occurs in Bruhat graphs.
It is convenient to say that a 2 by 2 minor in an ASM is \eh{interchangeable} if  it is one of the 32 patterns in the table.

\bex{
Let $B=\left(\begin{array}{ccccc}0 & 1 & 0 & 0 & 0 \\{0} & 0 & 1 & 0 & 0 \\1& {-1} & 0 & {0} & 1 \\0 & 1 & {-1} & 1 & 0 \\0 & 0 & 1 & 0 & 0\end{array}\right)$ be an ASM of size 5. Its corner sum matrix is 
$\left(\begin{array}{ccccc}0 & 1 & 1& 1 & 1 \\{0} & 1 & 2 & 2 & 2 \\1& {1} & 2 & {2} & 3 \\1& 2 & \ul{\,2\,} & 3 & 4 \\1 & 2 & 3 & 4 & 5\end{array}\right)$. 
Here the underlined part refers to $R_{45}^{34}$.
Then, we have 
\[
A=\left(\begin{array}{ccccc}0 & 1 & 0 & 0 & 0 \\{0} & 0 & 1 & 0 & 0 \\1& {-1} & 0 & {0} & 1 \\
0 & 1 & \ul{\,0\,} & \ul{\,0\,} & 0 \\0 & 0 & \ul{\,0\,} & \ul{\,1\,} & 0\end{array}\right)
\to \left(\begin{array}{ccccc}0 & 1 & 0 & 0 & 0 \\{0} & 0 & 1 & 0 & 0 \\1& {-1} & 0 & {0} & 1 \\0 & 1 & \ul{\,-1\,} & \ul{\,1\,} & 0 \\0 & 0 & \ul{\,1\,} & \ul{\,0\,} & 0\end{array}\right)=B.\]
This is type 9.
}\eex

\subsection{Essential points}

As seen in the previous example, an essential rectangle can be of size 1.

\bd{We say that an essential rectangle $R_{ij}^{kl}$ is 
an \emph{essential point} 
if $j=i+1$ and $l=k+1$ (so that $|R_{ij}^{kl}|=1$).
}\ed

\brk{
Here, we have a specific reason to coin the term ``essential point"; 
Fulton \cite{fulton} defined \eh{essential sets} for permutations as follows:
\begin{align*}
\Ess(w)=\{(i, j)\in [n-1]^2\mid i<w^{-1}(j), j<w(i), w(i+1)\le j, w^{-1}(j+1)\le i\}.
\end{align*}
We may rephrase these four conditions in terms of corner sum matrices:
For each $(i, j)\in [n-1]^2$, the following equivalences hold (as easily checked):
\[\left.\begin{array}{rlll}(1)&i<w^{-1}(j)  &\iff   &\wt{w}(i-1, j)=\wt{w}(i, j).   \\
 (2)& j<w(i) & \iff  & \wt{w}(i, j-1)=\wt{w}(i, j).  \\ (3)&w(i+1)\le j  &\iff   & \wt{w}(i+1, j)=\wt{w}(i, j)+1.  \\
 (4)& w^{-1}(j+1)\le i & \iff  & \wt{w}(i, j+1)=\wt{w}(i, j)+1. \end{array}\right.\]
Thus, $(i, j)$ is an essential point of $w$ if and only if $(i, j)$ is an element of  $\Ess(w)$.}\erk

As a consequence of Key Lemma, there is a one-to-one correspondence between essential points of $B$ and ASMs covered by $B$.
Hence every covering relation in ASM order is an edge relation of ASM graph.







Define a permutation $w$ to be \eh{bigrassmannian} if there exists a unique pair $(i, j)\in [n-1]^2$ with $w^{-1}(i)>w^{-1}(i+1)$ and $w(j)>w(j+1)$.

\bp{For $A\in \A_n$, the following are equivalent:
\be{\i $A$ is a bigrassmannian permutation.
\i $A$ has exactly one essential point.
}\ee
}\ep

\bpf{(Sketch)
Both are equivalent to what we call \eh{join-irreducibility};
see Lascoux-Sch\"{u}tzenberger \cite{ls} for details of equivalence of bigrassmannian and join-irreducibility. Recall from the theory of finite distributive lattices \cite{reading} that an element is join-irreducible in such a lattice if and only if it covers exactly one element.
}\epf

For example, 
$\wt{1342}=\left(\begin{array}{cccc}1 & 1 & 1 & 1 \\1 & 1 & 2 & 2 \\1 & \ul{\,1\,} & 2 & 3 \\1 & 2 & 3 & 4\end{array}\right)
$ has exactly one essential point so that 1342 is bigrassmannian.

\bp{\hfill
\be{\i (Chain Property) If $A\le B$, then there exists a directed path 
\[
A\to A_1\to A_2\to \dots \to A_k=B\]
 such that $\beta(A_i)-\beta(A_{i-1})=1$ for all $i$.
\i For each $A\in \A_n$, we have 
\[\b(A)=|\{B\in \A_n\mid B\le A \ph{a}\tn{ and } B \ph{a}\tn{ is bigrassmannian}\}|.\]
}\ee
}\ep

\bpf{(Sketch) As Reading reviewed \cite{reading}, $(\A_n, \le)$ is (isomorphic to) a finite distributive lattice graded by $|\{B\in \A_n\mid B\le A \ph{a}\tn{ and } B \ph{a}\tn{ is join-irreducible}\}|$. Since $\b(e)=0$ ($e$ the minimum element) and $\b$ increases by one along every covering relation, this function must coincide with $\b$. As a result, these two assertions follow.
}\epf



For this reason, we call $\b$ \eh{bigrassmannian statistics}. We will show more explicit formulas for $\b$ in Section \ref{sec5}.











\section{Total nonnegativity and (SFL) property}\label{sec4}

Toward our main result, we now need key ideas: total nonnegativity and subtraction-free Laurent (SFL) property. Although these are classical topics in  applications of Linear Algebra (as Ando \cite{ando}), here let us review precise definitions of such ideas.

\subsection{Total nonnegativity}

Let $A$ be a real $n$ by $n$ matrix.
\bd{We say that $A=(a_{ij})$ is \emph{totally nonnegative} (TNN) if 
the determinant for every square submatrix of $A$ is nonnegative.
}\ed

\brk{
Some authors use the term ``totally positive" to mean the same thing. Here we followed Drake-Gerrish-Skandera \cite{dgs1,dgs2}.
}\erk

Let $x_{11}, \dots, x_{nn}$ be commutative variables and  $f(x_{11}, \cd, x_{nn})$ a real polynomial. When no confusion arises, we simply write $f(x)$ to mean the polynomial $f(x_{11}, \dots, x_{nn})$. Similarly, for a real matrix $A=(a_{ij})$, we write $f(A)$ to mean the real number $f(a_{11}, \dots, a_{nn})$.

\bd{We say that a polynomial $f(x)$ is \emph{totally nonnegative} (TNN) if whenever $A$ is a TNN matrix of size $n$,  then $f(A)\ge 0$.
}\ed
\brk{In particular, if this is the case, then we have $a_{ij}\ge 0$ for every $(i, j)$ because $a_{ij}$ is itself the determinant of a 1 by 1 submatrix.}\erk
\bd{
Given $u\in S_n$, let $\x^u$ denote the monomial $x_{1u(1)}\cdots x_{nu(n)}$.
We call it the \emph{permutation monomial} for $u$.}\ed
\bex{Let 
$u=\left(\begin{array}{ccc} 0 & 1  &0   \\1  & 0  &0   \\0  & 0  &1  \end{array}\right)$ and 
$v=\left(\begin{array}{ccc} 0 & 0  & 1  \\1  & 0  & 0  \\0  &1   &0  \end{array}\right)$.  
Then 
\[
\x^u-\x^v=x_{12}x_{21}x_{33}-x_{13}x_{21}x_{32}\]
 is TNN since 
we have the inequality
\[a_{12}a_{21}a_{33}-a_{13}a_{21}a_{32}=a_{21}\left|\begin{array}{cc}a_{12}  & a_{13}  \\a_{32}  & a_{33} \end{array}\right|\ge 0\]
for all TNN matrices $A=(a_{ij})$.
}\eex

Now we extend total nonnegativity for ASMs.
As above, let $x_{11}, \dots, x_{nn}$ be commutative variables. 
For our purpose, consider a rational function $g(x)=g(x_{11}, \cd, x_{nn})$ rather than a polynomial.
\bd{We say that a rational function $g(x)$ is \emph{totally nonnegative} (TNN) if whenever $A$ is a TNN matrix of size $n$ and moreover $g(A)$ is defined, then $g(A)\ge 0$.
}\ed

If $g(x)$ is indeed a polynomial, then this definition coincides with the total nonnegativity above.




\bd{For each $A\in \A_n$, 
introduce the \emph{ASM (Laurent) monomial} 
\[
\x^A:=\prod_{i, j=1}^{n}x_{ij}^{a_{ij}}.\]
}\ed

Apparently, this idea includes permutation monomials.







\bex{
Let $B=\left(\begin{array}{ccc} 0 & 1  &0   \\1  & 0  &0   \\0  & 0  &1  \end{array}\right)$ and 
$C=\left(\begin{array}{ccc} 0 & 1  & 0  \\1  & -1  & 1  \\0  &1   &0  \end{array}\right)$.


Then $g(x)=\x^B-\x^C=x_{12}x_{21}x_{33}-x_{12}x_{21}x_{22}^{-1}x_{23}x_{32}$ is TNN since we have the inequality
\[g(A)=a_{12}a_{21}a_{33}-a_{12}a_{21}a_{22}^{-1}a_{23}a_{32}
=a_{12}a_{21}\fr{\left|\begin{array}{cc}  a_{22}&a_{23}   \\a_{32}  &a_{33}  \end{array}\right|}{a_{22}}\ge 0
\]
for all TNN matrices $A=(a_{ij})$ such that $a_{22}\ne 0$.

}\eex

This example suggests the following consequence of Key Lemma. 
If $A\to B$, then there exists a unique $(i, j, k, l)\in [n]^4$ such that 
\[\{(p, q)\in[n]^2\mid a_{pq}\ne b_{pq}\}=\{(i, k), (i, l), (j, k), (j, l)\}.\]
It leads to a decomposition of a difference of ASM monomials: 
Set \[\x^{AB}:= \prod_{a_{pq}= b_{pq}} x_{pq}^{a_{pq}} \mbox{
and } \x^{E(A, B)}:= \prod_{a_{pq}\ne b_{pq}} x_{pq}^{a_{pq}}-\prod_{a_{pq}\ne b_{pq}}x_{pq}^{b_{pq}}.\]
Clearly, the latter corresponds to interchangeable entries of $A$ and $B$. These two rational functions give the decomposition $\x^A-\x^B=\x^{AB}\x^{E(A, B)}$.
Observe that, in any case, $\x^{E(A, B)}$ is a product of $\left|\begin{array}{cc}x_{ik}& x_{il} \\x_{jk}  &x_{jl}  \end{array}\right|$ and a Laurent monomial in these four variables as 
\begin{align*}
\x^{E(A, B)}&=
x_{ik}^{a_{ik}}x_{il}^{a_{il}}x_{jk}^{a_{jk}}x_{jl}^{a_{jl}}-
x_{ik}^{b_{ik}}x_{il}^{b_{il}}x_{jk}^{b_{jk}}x_{jl}^{b_{jl}}\\
&=x_{ik}^{a_{ik}}x_{il}^{a_{il}}x_{jk}^{a_{jk}}x_{jl}^{a_{jl}}-
x_{ik}^{a_{ik}-1}x_{il}^{a_{il}+1}x_{jk}^{a_{jk}+1}x_{jl}^{a_{jl}-1}\\
&=x_{ik}^{a_{ik}}x_{il}^{a_{il}}x_{jk}^{a_{jk}}x_{jl}^{a_{jl}}\fr{\left|\begin{array}{cc}x_{ik}& x_{il} \\x_{jk}  &x_{jl}  \end{array}\right|}{x_{ik}x_{jl}}.
\end{align*}

\subsection{(SFL) property}

Let $f(x)$ be a real polynomial. 
\bd{We say that $f(x)$ has \emph{Subtraction-Free Rational} (SFR) property if $f(x)$ has a rational expression in minors of the matrix $x=(x_{ij})_{i, j=1}^n$ such that its denominator and numerator do not contain any subtraction.
Also say that $f(x)$ has \emph{Subtraction-Free Laurent} (SFL) property if $f(x)$ has (SFR) property with a rational expression such that its denominator is a monomial in minors of $x$.}\ed
We could define these properties for rational functions of $x_{11}, \dots, x_{nn} $ in the exactly same way. For example, $g(x)=x_{12}x_{21}x_{33}-x_{12}x_{21}x_{22}^{-1}x_{23}x_{32}$ has (SFR) and (SFL) properties as mentioned above.

\subsection{Drake-Gerrish-Skandera's characterizations} 

In the last two subsections, we reviewed two properties on polynomials.
What is the relation between (TNN), (SFL) properties and Bruhat order?
Drake-Gerrish-Skandera \cite{dgs1,dgs2} established the following equivalence:
\bft{
Let $u, v\in S_n$. Then the following are equivalent:
\be{\i $u\le v$ in Bruhat order.
\i $\x^u-\x^v$ is TNN.
\i $\x^u-\x^v$ has (SFL) property.
}\ee
}\eft

In the next section, we generalize this result as Theorem \ref{qth}.




\section{Main results}\label{sec5}

In this section, we give main results as Theorems \ref{bff}, \ref{qth} and \ref{mth3} with proofs.

\subsection{Bigrassmannian statistic}
\label{bsdt}

A bigrassmannian statistic is a meaningful number counting entries of corner sum matrices as the rank function of the finite distributive lattice. We now show a simple and new enumerative formula on entries of ASMs; 
this generalizes the author's formula \cite{kob}.
the directed graph structure plays a role for a proof.






\bt{For each $B\in \A_n$, we have \label{bff}
\[\b(B)=\sum_{i, j=1}^n\fr{ (i-j)^2}{2}b_{ij}.\]
}\et

\bpf{Let $\al(B)$ be the sum on the right hand side.
We will show that $\beta(B)=\al(B)$ by induction on $\b(B)$.
If $\b(B)=0$, then $B=e=(\delta_{ij})$ so that 
$\al(B)=\sum \fr{ (i-j)^2}{2}\delta_{ij}=0$.
Suppose $\b(B)>0$.
Choose $A\in\A_n$ such that $A\to B$, say $A\bs{ij}{\to}{kl} B$ so that 
\[
\b(B)-\b(A)=|R_{ij}^{kl}|=(j-i)(l-k).\]
It is now enough to show $\al(B)-\al(A)=(j-i)(l-k)$, that is, $\al$ satisfies the same recursion (which further shows that $\al(B)$ is an integer for all $B$).
Four entries $(a_{ik}, a_{il}, a_{jk}, a_{jl})$ must be one of the 16 cases listed in Table \ref{d1}.
 It follows, in any case, that
\begin{align*}
\al(B)-\al(A)&=\sum_{p, q=1}^n\fr{(p-q)^2}{2}(b_{pq}-a_{pq})\\
&=-\fr{(i-k)^2}{2}+\fr{(i-l)^2}{2}+\fr{(j-k)^2}{2}-\fr{(j-l)^2}{2}=(j-i)(l-k).
\end{align*}



}\epf

\begin{cor}
$\beta(w)=\di\sum_{i=1}^{n}\frac{1}{\,2\,}(i-w(i))^{2}$ for $w\in S_{n}$.
\end{cor}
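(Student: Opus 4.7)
The plan is to apply Theorem \ref{bff} directly to the special case where $B$ is a permutation matrix. Recall that for $w \in S_n$, viewed as a permutation matrix, the entries are given by $w_{ij} = 1$ if $j = w(i)$ and $w_{ij} = 0$ otherwise.

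Substituting into the formula $\beta(B) = \sum_{i,j=1}^{n} \frac{(i-j)^2}{2} b_{ij}$ from Theorem \ref{bff}, every term with $j \neq w(i)$ vanishes, and for each $i$ exactly one surviving term contributes $\frac{(i - w(i))^2}{2}$. Summing over $i$ yields the claimed identity. The only thing to check is that this substitution is legitimate, which is immediate since $S_n \subseteq \mathcal{A}_n$ via the identification of permutations with permutation matrices.

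There is no real obstacle here; the corollary is essentially a one-line specialization of Theorem \ref{bff}. The only observation worth making in the write-up is that the resulting sum over $i$ ranges only over the support of the permutation matrix, which turns the double sum into a single sum indexed by rows.
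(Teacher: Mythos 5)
Your proposal is correct and matches the paper's own proof exactly: both simply specialize Theorem \ref{bff} to a permutation matrix, observing that $b_{ij}\ne 0$ only when $j=w(i)$ (where $b_{ij}=1$), which collapses the double sum to $\sum_{i=1}^{n}\frac{1}{2}(i-w(i))^{2}$. Nothing is missing.
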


\begin{proof}
Use the theorem. For $B=w$, we have 
$b_{ij}\ne 0$ if and only if 
$b_{ij}=1$ and $j=w(i)$.
\end{proof}
\begin{ex}
$\beta(4312)=
\di\frac{1}{\,2\,}\left((1-4)^{2}+(2-3)^{2}+(3-1)^{2}+(4-2)^{2}\right)=9.$
\end{ex}

%
%
%

\subsection{($q$TNN) and ($q$SFL) properties}

We next introduce a $q$-analog of (TNN) and (SFL) properties. Motivated by Theorem \ref{bff}, we will consider a $q$-analog of our variables $x_{11}, \dots, x_{nn}$.
From now on, regard $q$ as a variable taking positive real numbers so that ``$q^{1/2}$" makes sense.
For each $(i, j)$, let $x_{ij, q}:=q^{(i-j)^2/2} x_{ij}$ and call $\{x_{ij, q}\}$ \emph{$q$-variables}.
Given a matrix $x=(x_{ij})$, let $x_q=(x_{ij, q})$ denote its $q$-analog.
Further, let $f(x_q)$ mean the polynomial $f(x_{11, q}, \dots,x_{nn, q})$ in $x_{ij}$ and $q$. In particular, the \emph{ASM (Laurent) $q$-monomial} for an ASM  $A$ is  
\[
\x_q^{A}:=\prod_{i,j} (x_{ij, q})^{a_{ij}}\,(=q^{\b(A)}\x^A).\] 
For example, if $A=\left(\begin{array}{ccc} 0 & 1  & 0  \\1  & -1  & 1  \\0  &1   &0  \end{array}\right)$, then 
\begin{quote}
$A_q=\left(\begin{array}{ccc} 0 & q^{1/2}  & 0  \\q^{1/2}  & -1  & q^{1/2}  \\0  &q^{1/2}   &0  \end{array}\right)$
and $\x_q^{A}=q^2x_{12}x_{21}x_{22}^{-1}x_{23}x_{32}$.
\end{quote}


\bd{Fix a positive real number $q_0$. Say a square matrix $A$ is \emph{locally TNN at $q_0$} if all minors of $A_{q_0}$ are nonnegative.}\ed

\brk{Let us make sure that ``$A$ is \emph{locally TNN at $1$}" is equivalent to saying ``$A$ is TNN" as defined earlier.}\erk

\bd{We say that ``$A$ is \emph{$q$TNN}" if it is locally TNN at $q$ for all $q>0$.
}\ed

We next introduce a $q$-analog of (extended) total nonnegativity.
Let $g(x)$ be a rational function in $x_{11}, \dots, x_{nn}$ as before.
\bd{Say $g(x)$ is \emph{locally TNN at $q_0$} if whenever $A$ is locally TNN at $q_0$ and moreover $g(A_{q_0})$ is defined, then $g(A_{q_0})\ge 0$.
Say ``$g(x)$ is \emph{qTNN}" if it is locally TNN at $q$ for all $q>0$. 
}\ed










List all minors of $x$ as $\Delta=\{\Delta_1(x), \dots, \Delta_m(x)\}$.

\bd{Say a rational function $g(x)$ in $x_{11}, \dots, x_{nn}$ has \emph{(qSFL) property} if there exist $F(x), G(x)\in \rr[x]$ such that \label{qsfl}
\be{\item $g(x)=F(x)/G(x)$,
\item $F(x)=\sum c_{i_1\cdots i_k}\Delta_{i_1}(x)\cdots\Delta_{i_k}(x)$ with $c_{i_1\cdots i_k}$ nonnegative integers, i.e., a subtraction-free polynomial in minors of $x$,
\item $G(x)=\prod_j \Delta_{j}(x)^{d_j}$ with $d_j$ nonnegative integers, i.e., a monomial in minors of $x$ and 
\item $g(x_q)=F(x_q)/G(x_q)\in \rr(x)[q]$, i.e., $g(x_q)$ is a polynomial in $q$.
}\ee

}\ed

Observe that if $g_1(x)$ and $g_2(x)$ have ($q$SFL) property, then so does $g_1+g_2$.

\subsection{Characterizations of ASM order}
\bt{Let $A, B\in \A_n$. Then the following are equivalent:\label{qth}
\be{\item $A\le B$ in ASM order.
\item $\x^A-\x^B$ is $q$TNN.
\item $\x^A-\x^B$ has ($q$SFL) property.
}\ee
}\et
We prove $(1)\then (3) \then (2) \then (1)$.


\bpf{
(1) $\then$ (3): The assertion is obvious for $A=B$. Let us suppose $A<B$. 
We first deal with the case $A\to B$, say $A\bs{ij}{\to}{kl} B$; this relation belongs to precisely one of 16 cases in Table \ref{d1}.
Recall that $\x^A-\x^B=\x^{AB}\x^{E(A, B)}$ with $\x^{AB}$ a Laurent monomial in $x_{11}, \dots, x_{nn}$ and $\x^{E(A, B)}$ a subtraction-free Laurent rational expression in minors of $x$. Hence $\x^A-\x^B$ has (SFL) property.
Moreover, $\x_q^{A}-\x_q^{B}=q^{\beta(A)}(\x^A-q^{(j-i)(l-k)}\x^B)$ is certainly a polynomial in $q$ so that we proved  ($q$SFL) property for $\x^A-\x^B$.
Suppose next $A<B$. By another interpretation of ASM order with ASM graph, we can find a directed path \[
A=A_0\to A_1\to \dots\to A_N=B.\] Now write 
\[\x^A-\x^B=(\x^{A_0}-\x^{A_1})+(\x^{A_1}-\x^{A_2})+\cdots +(\x^{A_{N-1}}-\x^{A_N}).\]
This is a sum of rational functions all of which have ($q$SFL) property. Hence so does $\x^A-\x^B$.\\
(3) $\then$ (2): 
Suppose $g(x)=\x^A-\x^B$ has ($q$SFL) property, say $g(x)=F(x)/G(x)$ as in Definition \ref{qsfl}.
We want to show that $g(x)$ is $q$TNN. For this purpose, we first verify a local condition: choose $q_0>0$ and let $A'$ be a locally TNN matrix at $q_0$ such that $G(A'_{q_0})\ne 0$. Then $g(A'_{q_0})=F(A'_{q_0})/G(A'_{q_0})\ge 0$ because each term in the sum $F(A'_{q_0})$ and each factor in the product $G(A'_{q_0})$ are nonnegative. Thus $g(x)$ is locally TNN at $q_0$. 
This is true for all $q_0>0$. Hence $g(x)$ is $q$TNN. \\
$(2)\then (1)$: This proof is almost same to Drake-Gerrish-Skandera \cite{dgs1,dgs2}. Nonetheless, we repeat it here.
Suppose $A\not\le B$. We may choose indices $k, l\in [n]$ such that $\wt{A}(k, l)<\wt{B}(k, l)$.
Now define the matrix $A'=(a_{ij}')$ by $a_{ij}'=\begin{cases}2 & i\le k \mbox{ and } j\le l\\
1&\mbox{ otherwise.}
\end{cases}$
It is easy to see that $A'$ is TNN since all square submatrices of $A'$ have determinant $0, 1$, or $2$. Now $x_{ij}=a_{ij}'$ yields
\begin{align*}
\x^A-\x^B\Bigr|_{x_{ij}=a'_{ij}}&=\pr_{i, j=1}^n(a_{ij}')^{a_{ij}}-\pr_{i, j=1}^n(a_{ij}')^{b_{ij}}\\
&=\pr_{i\le k, j\le l}2^{a_{ij}}-\pr_{i\le k, j\le l}2^{b_{ij}}
=2^{\wt{A}(k, l)}-2^{\wt{B}(k, l)}<0.
\end{align*}

Thus, $\x^A-\x^B$ is not TNN, i.e., $\x^A-\x^B$ is not locally TNN at $1$.
Hence $\x^A-\x^B$ is not $q$TNN.
 }\epf



\subsection{Corollaries}

We observe several corollaries. First, $q=1$ in Theorem \ref{qth} recovers this equivalence:
\bc{
Let $A, B\in \A_n$. Then the following are equivalent:
\be{\item $A\le B$ in ASM order.
\item $\x^A-\x^B$ is TNN.
\item $\x^A-\x^B$ has (SFL) property.
}\ee
}\ec


\bex{
Let 
$A=\left(\begin{array}{ccccc}0 & 1 & 0 & 0 & 0 \\1 & -1 & 1 & 0 & 0 \\0& 1 & -1 & 0& 1 \\0 & 0 & 0 & 1 & 0 \\0 & 0 & 1 & 0 & 0\end{array}\right)$, 
$B=\left(\begin{array}{ccccc}0 & 1 & 0 & 0 & 0 \\1 & -1 & 1 & 0 & 0 \\0& 0 & 0 & 0 & 1 \\0 & 1 & -1 & 1 & 0 \\0 & 0 & 1 & 0 & 0\end{array}\right)$ and 
$C=\left(\begin{array}{ccccc}0 & 1 & 0 & 0 & 0 \\0 & 0 & 1 & 0 & 0 \\1& -1 & 0 & 0 & 1 \\0 & 1 & -1 & 1 & 0 \\0 & 0 & 1 & 0 & 0\end{array}\right)$.
Since $A\to B\to C$, $\x^{A}-\x^{C}$ is TNN and has (SFL) property:
\[\x^{A}-\x^{C}=(\x^{A}-\x^{B})+(\x^{B}-\x^{C})
=
\]
\[\fr{x_{12}x_{21}x_{23}x_{35}x_{44}x_{53}}{x_{22}}\left(
\fr{x_{32}x_{43}-x_{42}x_{33}}{x_{43}x_{33}}
\right)
+\fr{x_{12}x_{23}x_{35}x_{42}x_{44}x_{53}}{x_{43}}\left(
\fr{x_{21}x_{32}-x_{31}x_{22}}{x_{32}x_{22}}
\right)
\]
\[=\fr{x_{12}x_{23}x_{35}x_{44}x_{53}
}{x_{22}x_{32}x_{33}x_{43}}
\left(x_{21}x_{32}\left|\begin{array}{cc} x_{32} & x_{33}  \\x_{42}  & x_{43} \end{array}\right|
+
x_{33}x_{42}\left|\begin{array}{cc} x_{21} & x_{22}  \\x_{31}  &x_{32}  \end{array}\right|
\right).
\]
As expected, this is a subtraction-free Laurent rational expression in minors of $x$.
It follows that 
\[\x_q^{A}-\x_q^{C}=(\x_q^{A}-\x_q^{B})+(\x_q^{B}-\x_q^{C})
\]
\[=\left.\fr{x_{12}x_{23}x_{35}x_{44}x_{53}
}{x_{22}x_{32}x_{33}x_{43}}
\left(x_{21}x_{32}\left|\begin{array}{cc} x_{32} & x_{33}  \\x_{42}  & x_{43} \end{array}\right|
+
x_{33}x_{42}\left|\begin{array}{cc} x_{21} & x_{22}  \\x_{31}  &x_{32}  \end{array}\right|
\right)\right\rvert_{x_{ij}\mapsto x_{ij, q}}.
\]

This is a subtraction-free Laurent rational expression in minors of $x_q$; moreover, $\b(A)=\fr{1}{2}+\fr{1}{2}+\fr{1}{2}+\fr{1}{2}+\fr{4}{2}+\fr{4}{2}=$ 6 so that $\x_q^{A}-\x_q^{C}=(q^6\x^A-q^7\x^B)+(q^7\x^B-q^8\x^C)$, certainly a polynomial in $q$.
}\eex


Here we record some consequence of this example (motivated by recent developments on algebraic combinatorics such as total positivity \cite{fz1}, and cluster algebras \cite{fz2}); for convenience, we prepare several words. Let us say that a Laurent monomial $\prod_{i, j}x_{ij}^{a_{ij}}$ is \emph{almost positive} if $a_{ij} \ge -1$ for all $i, j$. Say a minor of a matrix is \emph{small} if its size is $1$ or $2$; it is \emph{solid} if its rows and columns are consecutive. 

\bc{
If $A<B$, then $\x^A-\x^B$ has a rational expression as the product $L(x)\times M(x)$ such that $L(x)$ is an almost positive Laurent monomial in $x_{11}, \dots, x_{nn}$ and $M(x)$ is a subtraction-free polynomial in only small solid minors of $x$ (without a constant term). 
}\ec

\bpf{By Chain Property, there exists a directed path $A\to A_1\to A_2\to \cdots\to A_k=B$ such that $\b(A_i)-\b(A_{i-1})=1$. As seen from Key Lemma, each $\x^{A_i}-\x^{A_{i+1}}$ is a product of an almost positive Laurent monomial and a subtraction-free polynomial in only small solid minors without a constant term. Now regarding $\x^A-\x^B$ as a sum of such, find its rational expression with choosing a common denominator. Thus, we obtain the desired expression.
}\epf

\subsection{Signed bigrassmannian statistics}
\label{sbs}
\begin{table}[htp]
{\renewcommand{\arraystretch}{1.5}
\caption{Permutation statistics}
\label{perms}
\begin{center}
\begin{tabular}{|c|c|c|c|}\hline
&Mahonian &Eulerian& Bigrassmannian\\\hline
unsigned&$q$-factorial &Eulerian polynomial&Unknown \\\hline
signed&Wachs \cite{wachs}&D\'{e}sarm\'{e}nien-Foata \cite{df}&Theorem \ref{mth3}\\\hline
\end{tabular}
\end{center}
}\end{table}%

Permutation statistics is one of important topics in combinatorics on the symmetric groups. In particular, \eh{Mahonian} and \eh{Eulerian} are well-known examples (Table \ref{perms}).
More recently, there are some work on \emph{signed Mahonian} and \eh{signed Eulerian statistics} as Wachs \cite{wachs} and D\'{e}sarm\'{e}nien-Foata \cite{df}. As one subsequent idea of their work, here we introduce \eh{signed bigrassmannian statistics}.\\
The \emph{inversion number} $\ell(w)$ for $w\in S_{n}$ is 
\[
|\{(i, j)\in [n]^2\mid i<j \mb{ and } w(i)>w(j)\}|.\] The \eh{sign} of $w$ is $(-1)^{\ell(w)}$ as often appears in the context of determinants. Now recall that $\b(w)$ gives a nonnegative integer $|\{v\in S_n\mid v\le w \mb{ and } v \mb{ bigrassmannian} \}|$ for each permutation $w$. With these notions, let us  introduce a new kind of permutation statistics: 
\bd{Define \emph{signed bigrassmannian statistics} (or \emph{signed bigrassmannian polynomial}) over $S_n$ by 
\[B_n(q)=\sum_{w\in S_n}(-1)^{\ell(w)}q^{\b(w)}.\]}\ed
For example, $B_1(q)=1, B_2(q)=1-q$ and 
$B_3(q)=1-2q+2q^3-q^4$ (missing a $q^2$ term; see Figure \ref{a3}).




\bt[Signed bigrassmannian statistics]{\label{mth3}For all $n\ge 1$, we have 
\[B_n(q)=\prod_{k=1}^{n-1}(1-q^k)^{n-k}.
\]
}\et
The idea of our proof is to show the recursion $B_n(q)=\fr{\di {\strut B_{n-1}(q)^2} (1-q^{n-1})}{\di \strut B_{n-2}(q)}$ (which is not so obvious from the definition of $B_n(q)$). We derive this equation from a series of the lemmas below. Here, we confirm our setting: The notation $|\phantom{A}|$ simply denotes the determinant. Let $A=(a_{ij})$ be an $n$ by $n$ matrix with $n\ge 2$. We formally define the determinant of the empty ($0$ by $0$) matrix is $1$.
\bl[Dodgson's condensation]{
Let $A^i_j$ denote the submatrix obtained by deleting $i$-th row and $j$-th column from $A$. Then, we have
\[|A|=\frac{|A_1^1||A_n^n|-|A_n^1||A_1^n|}{|A_{1n}^{1n}|} 
\]
provided $|A_{1n}^{1n}|\ne 0$.
}\el
\bpf{See Bressoud \cite[p.112--113]{bressoud}.
}\epf

Next, we consider a $q$-analog of this formula.

\bl[a $q$-analog of Dodgson's condensation]{
With the same notation above, we have 
\[|A_q|=\frac{|(A_1^1)_q||(A_n^n)_q|-q^{n-1}|(A_n^1)_q||(A_1^n)_q|}{|(A_{1n}^{1n})_q|} 
\]
provided $|(A_{1n}^{1n})_q|\ne 0$.
}\el

\bpf{
Apply Dodgson's condensation to $A=A_q$:
\[|A_q|=\frac{|(A_q)_1^1||(A_q)_n^n|-|(A_q)_n^1||(A_q)_1^n|}{|(A_q)_{1n}^{1n}|} 
\]
We evaluate these five determinants on the right hand side.
\be{
\i $|(A_q)_{1}^{1}|=|(q^{(i-j)^2/2}a_{ij})_{i, j=2}^{n}|
=|(q^{(i-j)^2/2}a_{i+1, j+1})_{i, j=1}^{n-1}|
=|(A_{1}^{1})_q|$.
\i It is similar to show that $(A_{n}^{n})_q=(A_q)_{n}^{n}$.
\i Using the properties of determinants, we have 
\begin{align*}
|(A_q)_n^1|&=|(q^{(i-j)^2/2}a_{ij})_{i=2, j=1}^{n, n-1}|\\
&=|(q^{(i+1-j)^2/2}a_{i+1, j})_{i, j=1}^{n-1}|\\
&=|(q^{((i-j)^2-2(i-j)+1)/2}a_{i+1, j})_{i, j=1}^{n-1}|\\
&=q^{-\sum_{}i+\sum j}|(q^{(i-j)^2/2}q^{1/2}a_{i+1, j})_{i, j=1}^{n-1}|\\
&=q^{(n-1)/2}|(q^{(i-j)^2/2}a_{i+1, j})_{i, j=1}^{n-1}|\\
&=q^{(n-1)/2}|(A_n^1)_q|.
\end{align*}
\i It is similar to show $|(A_q)_1^n|=q^{(n-1)/2}|(A_1^n)_q|$ by symmetry of rows and columns.
\i 
$|(A_q)_{1n}^{1n}|=|(q^{(i-j)^2/2}a_{ij})_{i, j=2}^{n-1}|
=|(q^{(i-j)^2/2}a_{i+1, j+1})_{i, j=1}^{n-2}|=|(A_{1n}^{1n})_q|$.
}\ee
}\epf

\bl[determinantal expression]{
Consider the matrix $A=(a_{ij})$ with $a_{ij}=1$ for all $i, j\in[n]$.
Then, $\det(A_q)=B_n(q)$. Moreover, 
$|(A_1^1)_q|=|(A_n^n)_q|=|(A_n^1)_q|=|(A_1^n)_q|=B_{n-1}(q)$
and $|(A_{1n}^{1n})_q|=B_{n-2}(q)$.
}\el

\bpf{
$\det(A_q)=\sum_{w\in S_n}(-1)^{\ell(w)}\prod_{i=1}^nq^{(i-w(i))^2/2}
=\sum_{w\in S_n}(-1)^{\ell(w)}q^{\beta(w)}=B_n(q)$.
In the same way, we can prove the other results for permutation statistics over $S_{n-1}$ and $S_{n-2}$. 
}\epf

\bpf[Proof of Theorem \ref{mth3}]{
Clearly, $B_1(q)=1$ and $B_2(q)=1-q$ are valid. 
Suppose $n\ge 3$. Apply Dodgson's condensation to $A_q$. With $a_{ij}=1$ for all $i, j$, we get
\begin{align*}
B_n(q)&=\fr{\di {\strut B_{n-1}(q)B_{n-1}(q)}- q^{(n-1)/2}B_{n-1}(q)q^{(n-1)/2}B_{n-1}(q)}{\di \strut B_{n-2}(q)}
\\
&=\fr{\di {\strut B_{n-1}(q)^2} (1-q^{n-1})}{\di \strut B_{n-2}(q)}.
\end{align*}
By induction, we conclude that 
\begin{align*}
B_n(q)&=\fr{\di {\strut B_{n-1}(q)^2} (1-q^{n-1})}{\di \strut B_{n-2}(q)}\\
&=\fr{\di \lef(\prod_{k=1}^{n-2}(1-q^k)^{n-1-k}\ri)^2}{\di \prod_{k=1}^{n-3}(1-q^k)^{n-2-k}}\,(1-q^{n-1})=
\prod_{k=1}^{n-1}(1-q^k)^{n-k}.
\end{align*}

}\epf

\bex{Observe that 
\begin{align*}
B_3(q)&=\det\left(\begin{array}{ccc} 1 &1&1  \\1 & 1  & 1  \\1  &1   &1  \end{array}\right)_q=
\det\left(\begin{array}{ccc} 1 &q^{1/2}   & q^{4/2}  \\q^{1/2}  & 1  & q^{1/2}  \\q^{4/2}  &q^{1/2}   &1  \end{array}\right)\\
&=(1-q)^2(1-q^2)=1-2q+2q^3-q^4.
\end{align*}
Let us check $B_4(q)=1-3q+q^2+4q^3-2q^4-2q^5-2q^6+4q^7+q^8-3q^9+q^{10}.$
We can see this directly from Table \ref{tab}.
Indeed, our results verify this statistics as follows:
\begin{align*}
B_4(q)&=\det\left(\begin{array}{cccc} 1 &  1 &  1 & 1  \\ 1 &  1 &  1 & 1  \\1  & 1  & 1  & 1  \\ 1 & 1  & 1  & 1 \end{array}\right)_q
=\det\left(\begin{array}{cccc} 1 & q^{1/2}
  & q^{4/2}
  & q^{9/2}
  \\q^{1/2}
  &  1 & q^{1/2}
  & q^{4/2}
  \\q^{4/2}
  &   q^{1/2}
&  1 & q^{1/2}
  \\ q^{9/2}
 & q^{4/2}
  &  q^{1/2}
 & 1 \end{array}\right)\\
 &=\fr{((1-q)^2(1-q^2))^2}{1-q}(1-q^3)\\
&=(1-q)^3(1-q^2)^2(1-q^3)\\
&=1-3q+q^2+4q^3-2q^4-2q^5-2q^6+4q^7+q^8-3q^9+q^{10}.
\end{align*}
}\eex


\begin{table}
{\renewcommand{\arraystretch}{1.5}
\caption{signed bigrassmannian statistics over $S_4$}
\label{tab}
\begin{center}
\begin{tabular}{|c|c|c|c|c|c|c|c|c|c|c|c|c|c|c|c|}\hline
&sign&$\beta$&&sign&$\beta$&&sign&$\beta$&&sign&$\beta$\\\hline
1234&$+$&0&2134&$-$&1&3124&$+$&3&4123&$-$&6\\\hline
1243&$-$&1&2143&$+$&2&3142&$-$&5&4132&$+$&7\\\hline
1324&$-$&1&2314&$+$&3&3214&$-$&4&4213&$+$&7\\\hline
1342&$+$&3&2341&$-$&6&3241&$+$&7&4231&$-$&9\\\hline
1423&$+$&3&2413&$-$&5&3412&$+$&8&4312&$-$&9\\\hline
1432&$-$&4&2431&$+$&7&3421&$-$&9&4321&$+$&10\\\hline
\end{tabular}
\end{center}
\label{default}}
\end{table}%


\section{Concluding remarks}\label{sec6}

 
In this article, we introduced a new directed graph structure (\eh{ASM graph}) into alternating sign matrices. This generalizes  Bruhat graph whose edge relation is defined by transpositions and length functions. The key idea was to consider entries of corner sum matrices rather than entries of ASMs.\\
We established subsequent results of Drake-Gerrish-Skandera \cite{dgs1,dgs2} on equivalent characterizations of Bruhat order in two ways; from permutations to ASMs; $q$-analogs with respect to the bigrassmannian statistic $\b$. 
As a by-product, we found formulas for \eh{signed bigrassmannian statistic} with a determinantal expression and Dodgson's condensation. \\
We end with several ideas for our subsequent research.
\be{\item Drake-Gerrish-Skandera proved in fact more \cite{dgs1,dgs2};  Bruhat order is equivalent to the \emph{monomial nonnegativity} (MNN) as well as the \emph{Schur nonnegativity} (SNN). Can we establish some similar results on such properties from our viewpoints such as ASMs and the $q$-analog?
\i 
Recall that we made use of a $q$-analog of the determinant 
$\sum_{w\in S_n}(-1)^{\ell(w)}\x_q^w$ (with $x_{ij}=1$) and Dodgson's condensation to find the signed bigrassmannian statistic. 
We next wish to find the \emph{unsigned bigrassmannian statistic}; 
Reading \cite{reading} originally mentioned this problem. 
The natural idea is to consider the \eh{permanent} of the matrix $(q^{(i-j)^2/2})$. How can we evaluate this?
\item Recently, there are many references for research on bivariate permutation statistics such as \emph{Mahonian-Eulerian}; see Skandera \cite{sk2}, for example. Find the bivariate Mahonian-bigrassmannian statistics $\sum_{w\in S_n}t^{\ell(w)}q^{\b(w)}$. 


}\ee











\end{document}